\newcommand{\pd}{\mbox{pd}\,}
\newcommand{\id}{\mbox{id}\,}
\newcommand{\fd}{\mbox{fd}\,}
\newtheorem{theorem}{Theorem}[section]
\newtheorem{lemma}[theorem]{Lemma}
\newtheorem{corollary}[theorem]{Corollary}
\theoremstyle{definition}
\theoremstyle{remark}
\newtheorem{example}[theorem]{Example}
\theoremstyle{Definition and Notation}
\begin{document}
\bibliographystyle{amsplain}

\title[Homological dimensions of the amalgamated duplication of a ring...]{Homological dimensions of the amalgamated duplication of a ring along
a pure ideal}

\author{Mohamed Chhiti}
\address{Mohamed Chhiti\\Department of Mathematics, Faculty of Science and Technology of Fez, Box 2202, University S.M. Ben Abdellah Fez, Morocco.\\
chhiti.med@hotmail.com }
\author{Najib Mahdou}
\address{Najib Mahdou\\Department of Mathematics, Faculty of Science and Technology of Fez, Box 2202, University S.M. Ben Abdellah Fez, Morocco.\\
mahdou@hotmail.com }

\keywords{Amalgamated duplication of a ring along an ideal, pure
ideal, (n,d)-rings and weak(n,d)-rings}

\subjclass[2000]{13D05, 13D02}

\begin{abstract}

The aim of this paper is to study the classical  global and weak
dimensions of the  amalgamated duplication of a ring $R$ along a
pure ideal $I$.
\end{abstract}
\maketitle
\section{Introduction}
Throughout this paper all rings are commutative with identity
element and all modules are unitary.\\
Let $R$ be a ring, and let $M$ be an $R$-module. As usual we use
$\pd_R(M)$, $\id_R(M)$ and $\fd_R(M)$ to denote, respectively, the
classical projective dimension, injective dimension and flat
dimension of $M$. We use also $gldim(R)$ and $wdim(R)$ to denote,
respectively, the classical global and weak dimension of $R$.\\

For a nonnegative integer $n$, an $R$-module $M$ is $n$-presented
if there is an exact sequence $F_n\rightarrow F_{n-1} \rightarrow
...\rightarrow F_0 \rightarrow M\rightarrow 0$ in which each $F_i$
is a finitely generated free $R$-module. In particular,
``$0$-presented'' means finitely generated and ``$1$-presented''
means finitely presented. Set $\lambda_{R}(M)=\{ n/ M$ is
$n$-presented \}
 except if $M$ is not finitely generated. In this last case, we set $\lambda_{R}(M)= -1$. Not that
$\lambda_{R}(M)\geq n$ is a way to express the fact that $M$ is
$n$-presented. \\
Given nonnegative integers $n$ and $d$, a ring $R$ is called an
$(n,d)$-ring if every $n$-presented $R$-module has projective
dimension $\leq d$, and $R$ is called a weak $(n,d)$-ring  if
every $n$-presented cyclic $R$-module has projective dimension
$\leq d$. For instance, the $(0,1)$-domains are the Dedekind
domains, the $(1,1)$-domains are the Pr\"ufer domains and the
$(1,0)$-rings are the Von Neumann regular rings (see \cite{C, KM1,
KM2, M1, M2}). A commutative ring is called an $n$-Von Neumann
regular ring if it is an $(n,0)$-ring. Thus, the $1$-von Neumann
regular rings are the von Neumann regular rings (\cite[Theorem
1.3]{C}) . \\

The amalgamated duplication of a ring $R$ along an ideal $I$ is a
ring that is defined as the following subring with unit element
$(1,1)$ of $R\times R$:
\begin{eqnarray*}
   &R\bowtie I =\{(r,r+i)/r\in R,i\in I\}&
\end{eqnarray*}
This construction has been studied, in the general case, and from
the different point of view of pullbacks, by  D'Anna and Fontana
\cite{AF2}. Also, in \cite{AF1}, they  have considered the case of
the amalgamated duplication of a ring, in not necessarily
Noetherian setting, along a multiplicative canonical ideal in the
sense of \cite{HHP}. In \cite{A},  D'Anna has studied some
properties of $R\bowtie I $, in order to construct reduced
Gorenstein rings associated to Cohen-Macaulay rings and has
applied this construction to curve singularities. On the other
hand,  Maimani and  Yassemi, in \cite{MY}, have studied the
diameter and girth of the zero-divisor of the ring $R\bowtie I $.
Recently in \cite{CM}, the authors study some homological
properties of the rings $R\bowtie I$. Some references are
\cite{A, AF1, AF2, MY}. \\

Let $M$ be an $R$-module, the idealization $R\propto M$ (also
called the trivial extension), introduced by Nagata in 1956 (cf
 \cite{N}) is defined as the $R$-module $R\oplus M$ with multiplication
defined by $(r,m)(s,n)=(rs,rn+sm)$ (see
\cite{G, H, KM1, KM2}). \\

When $I^{2}=0$, the new construction $R\bowtie I $ coincides with
the idealization $R\propto I$. One main difference of this
construction, with respect to idealization is that the ring
$R\bowtie I $ can be a reduced ring (and, in fact, it is always
reduced if $R$ is a domain). \\
The first purpose of this paper is to study the classical global
and weak dimension of the amalgamated duplication of a ring $R$
along pure ideal $R$. Namely, we prove that if $I$ is a pure ideal
of $R$, then $wdim (R\bowtie I)= wdim (R)$.  Also,  we prove that
if $R$ is a coherent ring  and $I$ is a finitely generated pure
ideal of $R$, then $R\bowtie I$ is an $(1,d)$-ring provided the
local ring $R_{M}$ is an $(1,d)$-ring  for every maximal ideal $M$
of $R$. Finally, we give several  examples of rings which are
not weak $(n,d)$-rings (and so not $(n,d)$-rings) for each positive integers $n$ and $d$. \\

\section{main results}
Let $R$ be a commutative ring with identity element 1 and let $I$
be an ideal of $R$. We define $R\bowtie I =\{(r,s)/r,s\in R,s-r\in
I\}$. It is easy to check that $R\bowtie I$ is a subring with unit
element $(1,1)$,  of $R\times R$ (with the usual componentwise
operations) and that $R\bowtie I =\{(r,r+i)/r\in R,i\in I\}$.

It is easy to see that, if $ \pi_{i}$ $(i=1,2)$ are the
projections of $R\times R$ on $R$, then $\pi_{i}(R\bowtie I)=R$
 and hence if $O_{i}=ker(\pi_{i}\backslash R\bowtie I)$, then
$R\bowtie I/O_{i}\cong R$. Moreover $O_{1}=\{(0,i),i\in I\}$,
 $O_{2}=\{(i,0), i\in I\}$ and $O_{1}\cap O_{2}=(0)$.\\

Our first main result in this paper is given by the following
Theorem:
\begin{theorem}\label{1}
Let $R$ be a ring and   $I$ be a pure ideal of $R$. Then, $wdim
(R\bowtie I)= wdim (R)$.
\end{theorem}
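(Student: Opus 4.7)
The plan is to reduce to localizations: for any commutative ring $S$, $\wdim(S)=\sup\{\wdim(S_P):P\in\Max(S)\}$, so it suffices to prove that the families $\{(R\bowtie I)_P\}_{P\in\Max(R\bowtie I)}$ and $\{R_\fm\}_{\fm\in\Max(R)}$ coincide as collections of local rings. Concretely, I will show that for every $P\in\Max(R\bowtie I)$ there is $\fm\in\Max(R)$ with $(R\bowtie I)_P\cong R_\fm$, and that every such $\fm$ is reached.

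First I describe the primes of $R\bowtie I$. Because $O_1O_2\subseteq O_1\cap O_2=0$, each prime $P$ of $R\bowtie I$ contains $O_1$ or $O_2$; the isomorphisms $(R\bowtie I)/O_i\cong R$ then attach to $P$ a prime $\fm$ of $R$, with $\fm\supseteq I$ exactly when $P\supseteq O_1+O_2$. If $\fm\not\supseteq I$, one of $O_1,O_2$ (say $O_2$) is not contained in $P$, so some $(a,0)\in O_2\setminus P$ is invertible in the localization; the relation $(a,0)(0,j)=0$ for every $(0,j)\in O_1$ then forces $O_1=0$ in $(R\bowtie I)_P$, whence $(R\bowtie I)_P\cong((R\bowtie I)/O_1)_P\cong R_\fm$.

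If instead $\fm\supseteq I$, the purity hypothesis is used decisively. The ideal $I_\fm$ is pure in the local ring $R_\fm$ and is contained in $\fm R_\fm$; for each $x\in I_\fm$, purity produces $y\in I_\fm\subseteq\fm R_\fm$ with $yx=x$, so $(1-y)x=0$ with $1-y$ a unit, giving $x=0$. Hence $I_\fm=0$, and the natural map realizes $(R\bowtie I)\otimes_R R_\fm\cong R_\fm\bowtie 0\cong R_\fm$ as a local ring that coincides with $(R\bowtie I)_P$.

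Finally, since every $\fm\in\Max(R)$ is reached by some maximal ideal of $R\bowtie I$ (two such if $\fm\not\supseteq I$, one otherwise), the suprema agree and $\wdim(R\bowtie I)=\wdim(R)$. The main obstacle is the case $\fm\supseteq I$: the purity hypothesis is exactly what is needed to force $I_\fm=0$ and collapse the localization to $R_\fm$; all the remaining steps reduce to formal manipulations inside $R\bowtie I$ together with the bookkeeping between $\Max(R\bowtie I)$ and $\Max(R)$.
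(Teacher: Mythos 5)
Your proof is correct and follows essentially the same route as the paper: both reduce $\wdim(R\bowtie I)$ to its localizations at maximal ideals and use purity to force $I_\fm=0$ whenever $I\subseteq\fm$, so that every $(R\bowtie I)_P$ collapses to some $R_\fm$. Two minor differences are worth noting. First, you rederive the structure of $\Max(R\bowtie I)$ and the purity argument from scratch (via $O_1O_2=0$ and the element-wise criterion that each $x\in I$ satisfies $xy=x$ for some $y\in I$), where the paper cites D'Anna's description of the primes of $R\bowtie I$ and Glaz's result that a pure ideal localizes to $0$ or the whole ring; second, you obtain the inequality $\wdim(R)\le \wdim(R\bowtie I)$ from surjectivity of the contraction $\Max(R\bowtie I)\to\Max(R)$, whereas the paper derives it separately by faithfully flat descent along $R\to R\bowtie I$ (its Lemma 2.3). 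Your version is in fact slightly cleaner on one point: the paper's case analysis lists $(R\bowtie I)_M\cong R_m\times R_m$ when $I\subseteq m$, but as your computation shows, purity gives $I_m=0$ in that case and the localization is just $R_m$; this discrepancy is harmless for the theorem since $\wdim(R_m\times R_m)=\wdim(R_m)$ anyway.
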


To prove this Theorem we need some results.

\begin{lemma} \cite[Proposition 7]{A} \label{2}
 Let $R$  be a  ring  and let  $I$ be an ideal
 of $R$. Let $P$ be a prime ideal of $R$ and set:
\begin{itemize}
    \item $P_{0}=\{(p,p+i)/p\in P,i\in I\cap P\}$,
    \item $P_{1}=\{(p,p+i)/p\in P,i\in I\}$, and
    \item $P_{2}=\{(p+i,p)/p\in P,i\in I\}$.
\end{itemize}

\begin{enumerate}
    \item If $I\subseteq P$, then $P_{0}=P_{1}=P_{2}$ and $(R\bowtie I)_{P_{0}}\cong R_{P}\bowtie
    I_{P}$.
    \item If $I\nsubseteq P$, then $P_{1}\neq P_{2}$, $P_{1}\cap
    P_{2}=P_{0}$ and $(R\bowtie I)_{P_{1}}\cong R_{P} \cong (R\bowtie
    I)_{P_{2}}$.
\end{enumerate}
\end{lemma}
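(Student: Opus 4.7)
The plan is to treat the three claims—identifying $P_0, P_1, P_2$ as ideals, their intersection pattern, and the localization isomorphisms—by exploiting the two projections $\pi_1, \pi_2 : R \bowtie I \to R$ given by $\pi_1(r, r+i) = r$ and $\pi_2(r, r+i) = r + i$, which are surjective ring homomorphisms with kernels $O_1 = \{(0, i) : i \in I\}$ and $O_2 = \{(i, 0) : i \in I\}$ satisfying the crucial identity $O_1 \cdot O_2 = 0$. First I would observe that $P_1 = \pi_1^{-1}(P)$ and $P_2 = \pi_2^{-1}(P)$, so both are prime. A direct computation then gives $P_1 \cap P_2 = P_0$: an element $(r, r+i)$ lies in both iff $r \in P$ and $r + i \in P$, iff $r \in P$ and $i \in I \cap P$. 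When $I \subseteq P$ this forces $P_0 = P_1 = P_2$; when $I \nsubseteq P$, the choice of $i_0 \in I \setminus P$ produces $(0, i_0) \in P_1 \setminus P_2$ and $(i_0, 0) \in P_2 \setminus P_1$, so $P_1 \neq P_2$.

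For the isomorphism $(R \bowtie I)_{P_0} \cong R_P \bowtie I_P$ in case (1), I would define $\varphi: R \bowtie I \to R_P \bowtie I_P$ by $(r, r+i) \mapsto (r/1,\, r/1 + i/1)$. The key fact to verify is that under the hypothesis $I \subseteq P$, the target $R_P \bowtie I_P$ is local: its ideal $\{(a/s,\, a/s + i/t) : a \in P\}$ is proper and maximal, since $R_P$ is local with maximal ideal $PR_P$ and $I_P \subseteq PR_P$ forces any element whose first coordinate is a unit of $R_P$ to be a unit of $R_P \bowtie I_P$. Hence $\varphi$ sends $(R \bowtie I) \setminus P_0$ into units, and extends to a map $\tilde\varphi: (R \bowtie I)_{P_0} \to R_P \bowtie I_P$. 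Surjectivity follows from the presentation $(r/s,\, r/s + i/t) = \tilde\varphi\bigl((rt,\, rt + is)/(st, st)\bigr)$; injectivity uses that if $(r, r+i)/1 = 0$ in the target, both coordinates are annihilated by elements outside $P$, and a common denominator $v \notin P$ gives $(v, v) \in (R \bowtie I) \setminus P_0$ with $(v, v)(r, r+i) = 0$.

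For case (2), fix $i_0 \in I \setminus P$. The projection $\pi_1$ maps $(R \bowtie I) \setminus P_1$ into $R \setminus P$, so it induces a ring map $(R \bowtie I)_{P_1} \to R_P$. Inside $(R \bowtie I)_{P_1}$ the kernel $O_1$ vanishes: since $(i_0, 0) \in (R \bowtie I) \setminus P_1$ and $(i_0, 0)(0, i) = 0$ in $R \bowtie I$, each element $(0, i)/1$ is already zero in the localization. Surjectivity is immediate by lifting $r/s \in R_P$ to $(r, r)/(s, s)$, and injectivity reduces to the annihilation trick just noted. The symmetric argument with $\pi_2$ and $(0, i_0) \notin P_2$ gives $(R \bowtie I)_{P_2} \cong R_P$. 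The main obstacle is the local-ring verification in case (1): one must pin down exactly which elements of $R_P \bowtie I_P$ are units in order to legitimize inverting denominators, and this is precisely where the hypothesis $I \subseteq P$ (forcing $I_P$ into the maximal ideal of $R_P$) is used.
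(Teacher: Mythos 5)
The paper does not prove this lemma at all: it is quoted verbatim from \cite[Proposition 7]{A} (D'Anna), so there is no internal proof to compare against. Your argument is correct and is essentially the standard one from the cited source: identify $P_1$ and $P_2$ as $\pi_1^{-1}(P)$ and $\pi_2^{-1}(P)$, compute $P_1\cap P_2=P_0$ directly, use an element $i_0\in I\setminus P$ to separate $P_1$ from $P_2$ and to kill $O_1$ (resp.\ $O_2$) in the localization for case (2), and use the natural map into $R_P\bowtie I_P$ for case (1). All the key verifications are in place, including the two injectivity arguments via annihilators outside the relevant prime. The one point worth making fully explicit in case (1) is that when the first coordinate $x=r/1$ of an element $(x,x+y)$ is a unit of $R_P$ and $y\in I_P\subseteq PR_P$, not only is $x+y$ a unit of the local ring $R_P$, but the componentwise inverse actually lands back in the subring $R_P\bowtie I_P$, because $(x+y)^{-1}-x^{-1}=-y\,x^{-1}(x+y)^{-1}\in I_P$; you assert the unit conclusion without this last check, but it is routine and does not affect correctness.
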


\begin{lemma}\label{2} Let $I$ be a non-zero flat ideal of a ring
$R$. For any $R$-module $M$ we have:
\begin{enumerate}
    \item  $fd_{R}(M)= fd_{R\bowtie I}(M\otimes_{R}(R\bowtie I))$.
    \item  $pd_{R}(M) =pd_{R\bowtie I}(M\otimes_{R}(R\bowtie I)$.

\end{enumerate}
\end{lemma}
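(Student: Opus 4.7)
The plan rests on the observation that, as an $R$-module, $R\bowtie I$ is isomorphic to $R\oplus I$ via $(r,r+i)\mapsto(r,i)$. Since $I$ is flat, this presents $R\bowtie I$ as a flat $R$-algebra, and the structural inclusion $r\mapsto(r,r)$ is split as a map of $R$-modules by $\pi_1$, so $R$ is a direct $R$-module summand of $R\bowtie I$. Hence, for any $R$-module $M$,
\[
M\otimes_R(R\bowtie I)\;\cong\;M\oplus(M\otimes_R I)
\]
as $R$-modules, and in particular $M$ is a direct $R$-summand of $M\otimes_R(R\bowtie I)$.

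Both ``$\le$'' inequalities I would obtain uniformly. Given a projective (resp.\ flat) resolution $P_\bullet\to M$ over $R$, the complex $P_\bullet\otimes_R(R\bowtie I)\to M\otimes_R(R\bowtie I)$ is exact by flatness of $R\bowtie I$ over $R$, and each $P_i\otimes_R(R\bowtie I)$ is projective (resp.\ flat) over $R\bowtie I$ since base change along a ring map preserves both properties. This gives $\pd_{R\bowtie I}(M\otimes_R(R\bowtie I))\le \pd_R(M)$ and $\fd_{R\bowtie I}(M\otimes_R(R\bowtie I))\le \fd_R(M)$ at once.

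For the reverse inequality in (1) the direct-summand remark is enough. If $\fd_{R\bowtie I}(M\otimes_R(R\bowtie I))=n$, then any $R\bowtie I$-flat resolution of length $n$ restricts to an $R$-flat resolution (since $R\bowtie I$ is flat over $R$, restriction preserves flatness), so $\fd_R(M\otimes_R(R\bowtie I))\le n$, and the summand relation forces $\fd_R(M)\le n$.

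The main obstacle is the reverse inequality in (2): restriction from $R\bowtie I$ to $R$ only transports flatness, not projectivity, so the direct-summand trick cannot close the argument. I would sidestep this by using the adjunction
\[
\Hom_{R\bowtie I}\bigl(M\otimes_R(R\bowtie I),\,N\bigr)\;\cong\;\Hom_R(M,N)
\]
for every $R$-module $N$ regarded as an $R\bowtie I$-module through $\pi_1\colon R\bowtie I\to R$. Evaluated term-by-term on the projective $R\bowtie I$-resolution $P_\bullet\otimes_R(R\bowtie I)$ constructed above, this yields a natural isomorphism $\Ext^n_{R\bowtie I}(M\otimes_R(R\bowtie I),N)\cong \Ext^n_R(M,N)$ for every such $N$. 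Consequently, if $\pd_{R\bowtie I}(M\otimes_R(R\bowtie I))\le n$ then $\Ext^{n+1}_{R\bowtie I}$ vanishes on every argument, in particular on $R$-modules pulled back via $\pi_1$, giving $\Ext^{n+1}_R(M,N)=0$ for every $R$-module $N$ and therefore $\pd_R(M)\le n$.
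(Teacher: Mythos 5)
Your proposal is correct, and for the inequalities $\fd_{R\bowtie I}(M\otimes_R(R\bowtie I))\le \fd_R(M)$ and $\pd_{R\bowtie I}(M\otimes_R(R\bowtie I))\le \pd_R(M)$ it coincides with the paper's argument (tensor a length-$n$ resolution with the flat $R$-module $R\bowtie I$ and use that base change preserves projectivity and flatness). For the reverse inequalities you take a genuinely different route. The paper cites the Cartan--Eilenberg change-of-rings isomorphisms
$\Tor^{k}_{R}(M,N\otimes_{R}(R\bowtie I))\cong \Tor^{k}_{R\bowtie I}(M\otimes_{R}(R\bowtie I),N\otimes_{R}(R\bowtie I))$ and
$\Ext^{k}_{R}(M,N\otimes_{R}(R\bowtie I))\cong \Ext^{k}_{R\bowtie I}(M\otimes_{R}(R\bowtie I),N\otimes_{R}(R\bowtie I))$,
valid because $\Tor^{k}_{R}(M,R\bowtie I)=0$ for $k\ge 1$, and then extracts $\Tor^{k}_{R}(M,N)$ and $\Ext^{k}_{R}(M,N)$ as direct summands using that $N$ is an $R$-direct summand of $N\otimes_R(R\bowtie I)$; this treats flat and projective dimension by one uniform mechanism, with the summand trick applied in the second variable. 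You instead apply the summand trick in the first variable for flat dimension (restriction of scalars along the flat map $R\to R\bowtie I$ preserves flatness, and $M$ is an $R$-summand of $M\otimes_R(R\bowtie I)$), and for projective dimension you replace the paper's extension-of-scalars test module $N\otimes_R(R\bowtie I)$ by the restriction of $N$ along $\pi_1$, for which the adjunction gives $\Ext^{k}_{R\bowtie I}(M\otimes_{R}(R\bowtie I),N)\cong\Ext^{k}_{R}(M,N)$ on the nose, with no summand extraction needed. Both arguments are sound; yours is more self-contained (no appeal to the Cartan--Eilenberg isomorphisms) and correctly isolates why the projective case needs a separate device, namely that restriction of scalars transports flatness but not projectivity, whereas the paper's version has the advantage of handling $\Tor$ and $\Ext$ simultaneously.
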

\begin{proof}
 Note that the $R$-module $R\bowtie I$ is faithfully flat
since $I$ is flat.\\
 Firstly suppose that $fd_R(M)\leq n$ (resp.,
$pd_R(M)\leq n$) and pick an $n$-step flat (resp., projective)
resolution of $M$ over $R$ as follows:
    $$(\ast)\quad 0\rightarrow F_n\rightarrow F_{n-1} \rightarrow
...\rightarrow F_0 \rightarrow M\rightarrow 0.$$
 Applying the functor  $-\otimes_RR\bowtie I$ to $(\ast)$, we obtain
the  exact sequence of $(R\bowtie I)$-modules:
$$0\rightarrow
F_n\otimes_{R}(R\bowtie I)\rightarrow F_{n-1}\otimes_{R}(R\bowtie
I) \rightarrow ...\rightarrow F_0\otimes_{R}(R\bowtie I)
\rightarrow M\otimes_{R}(R\bowtie I)\rightarrow 0$$ Thus
$fd_{R\bowtie I}(M\otimes_{R}(R\bowtie I))\leq n$ (resp.,
$pd_{R\bowtie I}(M\otimes_{R}(R\bowtie I))\leq n$).

Conversely, suppose that $fd_{R\bowtie I}(M\otimes_{R}(R\bowtie
I))\leq n$ (resp., $pd_{R\bowtie I}(M\otimes_{R}(R\bowtie I))\leq
n$. Inspecting \cite[page 118]{CE} and since
$Tor^{k}_{R}(M,R\bowtie I)=0$
 for each  $k\geq1$, we conclude that  for any
$R$-module $N$ and each  $k\geq 1$ we have:

\begin{description}
    \item[(1)] $Tor^{k}_{R}(M,N\otimes_{R} (R\bowtie I))\cong Tor^{k}_{R\bowtie I}(M\otimes_{R} (R\bowtie I),
N\otimes_{R} (R\bowtie I))$
    \item[(2)] $Ext^{k}_{R}(M,N\otimes_{R}
(R\bowtie I))\cong Ext^{k}_{R\bowtie I}(M\otimes_{R} (R\bowtie
I),N\otimes_{R} (R\bowtie I))$
\end{description}

On the other hand $Tor_R^k(M,N)$ and $Ext_R^k(M,N)$ are direct
summands of $Tor^{k}_{R}(M,N\otimes_{R} (R\bowtie I))$ and
$Ext^{k}_{R}(M,N\otimes_{R} (R\bowtie I))$ respectively. Then, we
conclude that $fd_R(M)\leq n$ (resp., $pd_R(M)\leq n$) and this
finish the proof of this result.
\end{proof}
One direct consequence of this Lemma is:

\begin{corollary}\label{coro}

Let $I$ be a non-zero flat ideal of a ring $R$. Then:
\begin{enumerate}
    \item  $wdim(R)\leq wdim(R\bowtie I)$.
    \item  $gldim(R)\leq gldim(R\ltimes I)$.
\end{enumerate}
\end{corollary}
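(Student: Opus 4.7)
The plan is to deduce both inequalities as immediate formal consequences of the preceding Lemma, using the standard description of weak and global dimensions as suprema of flat and projective dimensions over all modules. Specifically, $wdim(S) = \sup\{fd_S(N) : N \text{ is an } S\text{-module}\}$ and $gldim(S) = \sup\{pd_S(N) : N \text{ is an } S\text{-module}\}$, so it is enough to produce, for each $R$-module $M$, an $R\bowtie I$-module whose flat (respectively projective) dimension over $R\bowtie I$ dominates $fd_R(M)$ (respectively $pd_R(M)$).

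For part (1), I would take an arbitrary $R$-module $M$; the Lemma gives the equality $fd_R(M) = fd_{R\bowtie I}(M\otimes_R(R\bowtie I))$, and since $M\otimes_R(R\bowtie I)$ is itself an $R\bowtie I$-module, the right-hand side is bounded above by $wdim(R\bowtie I)$. Taking the supremum over all $R$-modules $M$ on the left gives $wdim(R)\leq wdim(R\bowtie I)$. Part (2) is obtained by the same argument with ``flat'' replaced by ``projective'' and $wdim$ replaced by $gldim$, invoking the second equality of the Lemma instead of the first; the symbol $R\ltimes I$ appearing in the statement must be read as $R\bowtie I$, since only the latter is covered by the Lemma and is the object under study.

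The hypothesis needed for the Lemma, namely that $I$ is a non-zero flat ideal of $R$, is exactly what the corollary assumes, so there is nothing further to check. In short, there is no real obstacle here: the substantive content (faithful flatness of $R\bowtie I$ over $R$ when $I$ is flat, and the transfer of $\mathrm{Tor}$ and $\mathrm{Ext}$ through $-\otimes_R(R\bowtie I)$) has already been absorbed into the Lemma, and the corollary is a one-step supremum argument.
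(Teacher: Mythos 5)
Your proposal is correct and is exactly the argument the paper intends: the corollary is stated as a ``direct consequence'' of the preceding Lemma, and the one-step supremum argument over all $R$-modules $M$, using $fd_R(M)=fd_{R\bowtie I}(M\otimes_R(R\bowtie I))$ and its projective analogue, is the whole content. Your reading of $R\ltimes I$ as a typo for $R\bowtie I$ is also the right call.
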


\begin{proof}[Proof of Theorem \ref{1}]

The inequality $wdim(R)\leq wdim(R\bowtie I)$ holds directly from
Corollary \ref{coro} since $I$ is pure and then flat. So, only the
other inequality need a proof.\\
Using \cite[Theorem 1.3.14]{G}  we have: $$(\intercal)\quad wdim
(R\bowtie I)= sup \{wdim ((R\bowtie I)_{M})|M\;is \; a\; maximal\;
ideal\; of\; R\bowtie I \}.$$ Let $M$ be an arbitrary maximal
ideal of $R\bowtie I$ and set $m:=M\cap R$. Then necessarily $M\in
\{ M_{1},M_{2}\}$ where $M_{1}=\{(r,r+i)/r\in m,i\in I\}$ and
$M_{2}=\{(r+i,r)/r\in m,i\in I\}$ (by \cite[Theorem 3.5]{AF2}). On
the other hand, $I_m\in \{0,R_m\}$ since $I$ is pure and $m$ is
maximal in R (by \cite[Theorem 1.2.15]{G}). Then, testing all
cases of Lemma \ref{2}, we resume two cases;
\begin{enumerate}
    \item $(R\bowtie I)_{M}\cong R_{m}$ if $I_m=0$ or $I\nsubseteq
    m$.
    \item $(R\bowtie I)_{M} \cong R_{m}\times R_m$ if $I_m=R_m$ or
    $I\subseteq m$.
\end{enumerate}

Hence, we have $wdim((R\bowtie I)_M)=wdim(R_m)\leq wdim(R)$. So,
the desired inequality follows from the equality $(\intercal)$.
\end{proof}
\begin{corollary}\label{3} Let $I$ be a finitely generated pure ideal of a ring  $R$. Then $R$ is a semihereditary ring if, and only
if, $R\bowtie I$ is a semihereditary ring.
\end{corollary}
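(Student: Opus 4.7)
The plan is to reduce the corollary to the elementary fact that a finite direct product of commutative rings is semihereditary if and only if each factor is. Since the hypotheses here are stronger than those of Theorem \ref{1} (namely $I$ is also finitely generated), I expect a direct structural argument to be cleaner than a global-dimension-plus-coherence argument.

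First I would establish the structural lemma that \emph{a finitely generated pure ideal in a commutative ring is generated by an idempotent}. Purity of $I$ says that for every finite subset $\{x_{1},\dots,x_{k}\}\subseteq I$ there is an element $e\in I$ with $ex_{j}=x_{j}$ for all $j$; applied to a finite generating set of $I$ this yields an idempotent $e\in I$ with $I=eR$.

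Next I would use the ring decomposition $R\cong eR\times(1-e)R$, under which $I$ corresponds to $eR\times 0$, and unwind the definition of $R\bowtie I$. Writing $r=(a_{1},b)$ and $r+i=(a_{2},b)$ with $a_{1},a_{2}\in eR$ and $b\in(1-e)R$, the assignment $(r,r+i)\mapsto(a_{1},b,a_{2})$ gives a bijection between $R\bowtie I$ and $eR\times(1-e)R\times eR$, and a direct check on addition and multiplication shows it is in fact a ring isomorphism
$$R\bowtie I\;\cong\; eR\times(1-e)R\times eR\;\cong\; R\times eR.$$
From here the corollary is immediate: $R\times eR$ is semihereditary iff both $R$ and $eR$ are semihereditary, and $eR$ is already a ring-theoretic direct factor of $R$, so $R\bowtie I$ is semihereditary if and only if $R$ is.

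The main obstacle is the identification $R\bowtie I\cong R\times eR$: one has to track the twist $s=r+i$ in the definition of the amalgamated duplication when decomposing along the idempotent, without which one inadvertently collapses the duplication back onto $R$. A more conceptual alternative would be to combine Theorem \ref{1} (giving $wdim(R\bowtie I)=wdim(R)$) with the classical characterization ``semihereditary $\Leftrightarrow$ coherent and $wdim\le 1$'' and transfer coherence along the finitely presented faithfully flat extension $R\to R\bowtie I$, but the idempotent argument above is shorter and sidesteps the coherence transfer entirely.
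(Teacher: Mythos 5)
Your proof is correct, and it takes a genuinely different route from the paper. The paper disposes of this corollary in one line by combining Theorem \ref{1} (the equality $wdim(R\bowtie I)=wdim(R)$) with the coherence transfer result \cite[Theorem 3.1]{CM}, via the classical characterization ``semihereditary $\Leftrightarrow$ coherent and $wdim\le 1$'' --- exactly the ``more conceptual alternative'' you mention at the end. Your idempotent argument is sound at every step: a finitely generated pure ideal is indeed generated by an idempotent (the standard induction $e=b_1+b_2-b_1b_2$ handles finite subsets), and your identification $R\bowtie I\cong eR\times(1-e)R\times eR\cong R\times eR$ checks out, since writing $r=(a_1,b)$ and $i=(c,0)$ the second coordinate of the pair $(r,r+i)$ is $(a_1+c,b)$ with $a_1+c$ ranging freely over $eR$, and multiplication in $R\bowtie I$ is componentwise. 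What your approach buys is independence from the external reference \cite{CM} and from the coherence machinery altogether; it also yields more than the corollary claims, since the isomorphism $R\bowtie I\cong R\times eR$ instantly transfers any property stable under finite products and direct factors (coherence, Gaussian, arithmetical, any bound on $wdim$ or $gldim$). What it costs is generality in a different direction: it is confined to finitely generated pure ideals, whereas the paper's Theorem \ref{1} (proved by localizing at maximal ideals of $R\bowtie I$) covers arbitrary pure ideals, for which no generating idempotent need exist. For the corollary as stated, where $I$ is assumed finitely generated, your argument is a clean and self-contained replacement.
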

\begin{proof} Follows immediately from Theorem \ref{1} and \cite[Theorem
3.1]{CM}.
\end{proof}

Recall that a ring $R$ is called  Gaussian  if $c(fg)=c(f)c(g)$
for every polynomials $f,g\in R[X]$, where $c(f)$ is the content
of $f$, that is, the ideal of $R$ generated by the coefficients of
$f$. See for instance \cite{G_{1}}.

\begin{corollary} Let $R$ be a reduced ring  and let  $I$ be  a pure
ideal of $R$. Then $R$ is  Gaussian  if, and only if, $R\bowtie I$
is   Gaussian.
\end{corollary}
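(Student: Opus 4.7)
The plan is a local-global argument resting on two ingredients: the fact that the Gaussian property is local on maximal ideals (a ring $A$ is Gaussian if and only if $A_N$ is Gaussian for every maximal ideal $N$ of $A$), and the description of the maximal ideals of $R\bowtie I$ and their localizations that was already used in the proof of Theorem \ref{1}.

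First, I would note that the projection $\pi_{1}\colon R\bowtie I\to R$ is a surjection with kernel $O_{1}$, placing the maximal spectra in a useful correspondence: every maximal ideal of $R$ lifts to a maximal ideal of $R\bowtie I$, and every maximal ideal $M$ of $R\bowtie I$ contracts to a maximal ideal $m:=M\cap R$ of $R$. Using the purity of $I$, one has $I_{m}\in\{0,R_{m}\}$ for every such $m$ (as recalled in the proof of Theorem \ref{1}), so the structure lemma on the localizations of $R\bowtie I$ forces $(R\bowtie I)_{M}$ to be isomorphic either to $R_{m}$ (when $I_{m}=0$ or $I\not\subseteq m$) or to $R_{m}\bowtie R_{m}$ (when $I_{m}=R_{m}$). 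In the latter case, the identity $\{(r,r+i):r,i\in R_{m}\}=R_{m}\times R_{m}$ shows that this localization is canonically isomorphic to the product ring $R_{m}\times R_{m}$.

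The only substantive task that remains is to verify that, for any commutative ring $A$, the product $A\times A$ is Gaussian if and only if $A$ is Gaussian. This reduces to a direct content computation: for $f=(f_{1},f_{2})$ and $g=(g_{1},g_{2})$ in $(A\times A)[X]$, the content decomposes as $c(f)=c(f_{1})\times c(f_{2})$, and ideal multiplication in $A\times A$ is componentwise; consequently the identity $c(fg)=c(f)c(g)$ holds in $(A\times A)[X]$ precisely when the analogous identities hold in $A[X]$ for both coordinates.

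Combining these observations, the Gaussian property transfers in both directions between $R$ and $R\bowtie I$, since the local pieces match up one-to-one. The step I expect to require the most care is the content-decomposition in the product ring (checking that $c(f)$ really splits as a product of ideals coordinatewise and is compatible with the multiplication $fg$); the hypotheses of purity of $I$ and reducedness of $R$ feed in mainly to guarantee the clean local dichotomy that makes the localizations of $R\bowtie I$ so transparent and to invoke the local-global characterization of Gaussianness available in the literature (e.g.\ \cite{G_{1}}).
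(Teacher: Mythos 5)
Your argument is correct, but it is a genuinely different route from the paper's. The paper deduces the corollary in one line from Theorem \ref{1} together with Glaz's characterization of reduced Gaussian rings as the rings of weak dimension at most one (\cite[Theorem 2.2]{G_{1}}) and the fact that $R\bowtie I$ is reduced when $R$ is (\cite[Theorem 3.5]{AF2}); that chain of equivalences is where the reducedness hypothesis is genuinely used, since non-reduced Gaussian rings need not have finite weak dimension. You instead argue directly via the locality of the Gaussian property on maximal ideals, the lying-over correspondence between $\Max(R\bowtie I)$ and $\Max(R)$, and the purity-induced dichotomy $I_m\in\{0,R_m\}$. Your proof never actually needs $R$ to be reduced, so it establishes a slightly stronger statement; the price is that you must supply (or cite) the local-global principle for Gaussianness, which is standard: one direction is that content formation commutes with localization, and the other follows because $c(f)c(g)/c(fg)$ is a finitely generated module that vanishes locally.

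Two small points of care. First, the case you label ``$I_m=R_m$, giving $(R\bowtie I)_M\cong R_m\bowtie R_m\cong R_m\times R_m$'' is in fact vacuous: $I_m=R_m$ forces $I\nsubseteq m$, and then Lemma \ref{2}(2) gives $(R\bowtie I)_{M}\cong R_m$ outright, while the isomorphism $(R\bowtie I)_{M}\cong R_m\bowtie I_m$ is only available when $I\subseteq m$ (in which case purity forces $I_m=0$). The paper's own proof of Theorem \ref{1} lists the same spurious case, and it is harmless here because your verification that $A\times A$ is Gaussian exactly when $A$ is (via $(A\times A)[X]\cong A[X]\times A[X]$ and componentwise contents) is correct; but strictly speaking every localization of $R\bowtie I$ at a maximal ideal is isomorphic to some $R_m$, and a localization at a maximal ideal could never be a nontrivial product ring in any case. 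Second, for the ``if'' direction you should note explicitly that every maximal ideal $m$ of $R$ is realized as $M\cap R$ for some maximal $M$ of $R\bowtie I$ with $(R\bowtie I)_M\cong R_m$ (e.g.\ $M=\pi_1^{-1}(m)$), so that Gaussianness of all such localizations really does recover Gaussianness of $R$. With those clarifications your proof is complete.
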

\begin{proof} Follows immediately from Theorem \ref{1} , \cite[Theorem
2.2]{G_{1}} and  \cite[Theorem 3.5(a)(vi)]{AF2}.
\end{proof}

By the fact that every ideal over a Von Neumann regular ring is
pure, we conclude from Theorem \ref{1} the following Corollary
which have already proved in \cite{CM} with different methods.

\begin{corollary}Let $R$ be a ring  and let  $I$ be an
ideal of $R$. If $R$ is a Von Neumann regular ring, then so is
$R\bowtie I$.
\end{corollary}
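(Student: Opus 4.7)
The plan is to reduce the statement to a direct application of Theorem~\ref{1}. The characterization I will rely on is the well-known fact that a commutative ring $R$ is Von Neumann regular if and only if $wdim(R)=0$ (equivalently, every $R$-module is flat). Starting from this equivalence, proving $R\bowtie I$ is Von Neumann regular amounts to showing $wdim(R\bowtie I)=0$.

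First I would invoke the hypothesis on $R$ together with the fact highlighted in the paragraph preceding the corollary: over a Von Neumann regular ring every ideal is pure. Applied to the given ideal $I$, this places us in the setting of Theorem~\ref{1}, namely $I$ is a pure ideal of $R$. Therefore the theorem applies directly and yields
\begin{equation*}
wdim(R\bowtie I)=wdim(R).
\end{equation*}

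Finally, since $R$ is Von Neumann regular, $wdim(R)=0$, and hence $wdim(R\bowtie I)=0$, which by the same characterization means $R\bowtie I$ is Von Neumann regular. This completes the argument.

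There is essentially no obstacle here: the corollary is a one-line consequence obtained by combining the standard characterization of Von Neumann regular rings via weak dimension with the purity of every ideal in such rings, and then quoting Theorem~\ref{1}. The only mildly delicate point, if one wishes to be fully explicit, is to justify the claim that every ideal in a Von Neumann regular ring is pure; this follows from the fact that in a Von Neumann regular ring each principal ideal is generated by an idempotent, so every ideal is a filtered union of direct summands, hence pure (equivalently, flat as an $R$-module, which forces purity since $R/I$ is also flat).
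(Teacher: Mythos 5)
Your proposal is correct and matches the paper's own argument, which likewise derives the corollary by noting that every ideal of a Von Neumann regular ring is pure and then applying Theorem~\ref{1} together with the characterization $wdim(R)=0$. Your added justification that ideals in such rings are pure (via idempotent-generated principal ideals) is a welcome detail the paper leaves implicit.
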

If the ring $R$ is Noetherian the global and weak dimensions
coincide. Hence, Theorem \ref{1} can be writing as follows:

\begin{corollary} If $I$ is a pure ideal of a Noetherian ring $R$, then
 $gdim (R\bowtie I)= gdim (R)$.
\end{corollary}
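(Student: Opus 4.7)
The plan is to reduce to Theorem~\ref{1} using the classical identity $\gldim(S)=\wdim(S)$ that holds for any Noetherian ring $S$. Once $R\bowtie I$ is known to be Noetherian, the corollary is immediate: I would chain
$$\gldim(R\bowtie I) \;=\; \wdim(R\bowtie I) \;=\; \wdim(R) \;=\; \gldim(R),$$
where the outer equalities come from the Noetherianness of the two rings and the middle one is Theorem~\ref{1}. Here I am reading the $gdim$ appearing in the statement as the classical global dimension $\gldim$.

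The only step that actually needs checking is the Noetherianness of $R\bowtie I$. For this I would invoke the short exact sequence
$$0 \longrightarrow O_{1} \longrightarrow R\bowtie I \longrightarrow R \longrightarrow 0$$
of $R$-modules recorded in the discussion preceding Theorem~\ref{1}, together with the identification $O_{1}\cong I$ as $R$-modules. Since $R$ is Noetherian the ideal $I$ is finitely generated, so $R\bowtie I$ sits as an extension of two finitely generated $R$-modules and is therefore a finite $R$-algebra; Noetherianness of $R$ then transfers to $R\bowtie I$. Beyond making this transfer explicit there is no real obstacle — the corollary is essentially a one-line consequence of Theorem~\ref{1}.
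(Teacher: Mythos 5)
Your proposal is correct and follows essentially the same route as the paper, which simply observes that global and weak dimensions coincide over Noetherian rings and then invokes Theorem~\ref{1}. Your extra step verifying that $R\bowtie I$ is itself Noetherian (as a module-finite $R$-algebra, since $R\bowtie I\cong R\oplus I$ as $R$-modules and $I$ is finitely generated) is a detail the paper leaves implicit, and it is a worthwhile addition rather than a different argument.
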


A simple example of Theorem  \ref{1} is given by introducing the
notion of the trace of modules. Recall that if $M$ is an
$R$-module, the trace of $M$, $Tr(M)$, is the sum of all images of
morphisms $M\rightarrow R_{R}$ (see \cite{MPR}). Clearly $Tr(M)$
is an ideal of $R$.
\begin{example} If $M$ is a projective module over a ring $R$, then $wdim (R\bowtie Tr(M))=wdim
(R)$.
\end{example}
\begin{proof}
Clear since $Tr(M)$ is a pure ideal whenever $M$ is projective (by
\cite[pp. 269-270]{V}).
\end{proof}

Now, we study the transfer of an $(1,d)$-property.

\begin{theorem}\label{20}Let $R$ be a coherent ring such that for every maximal ideal $m$ of $R$
the local ring $R_{m}$ is an $(1,d)$-ring, and let  $I$ be a
finitely generated pure ideal of $R$. Then $R\bowtie I$ is an
$(1,d)$-ring.
\end{theorem}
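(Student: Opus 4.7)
\emph{Plan.} The strategy is to exploit the fact that a finitely generated pure ideal must be generated by an idempotent. Indeed, purity forces $I = I^2$ (tensor $I \hookrightarrow R$ with $R/I$ to get $I/I^2 = 0$), and then Nakayama applied to $I = I \cdot I$ with $I$ finitely generated produces $e \in I$ satisfying $(1-e)I = 0$, so that $I = eR$ with $e = e^2$.

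Writing $R = (1-e)R \times eR =: R_1 \times R_2$, so that $I$ corresponds to $\{0\} \times R_2$, a direct coordinate-wise computation yields
\[
R \bowtie I \;\cong\; R_1 \times R_2 \times R_2,
\]
because the first coordinate of an element of $R \bowtie I$ is forced to agree with the second on the $R_1$-part but the two coordinates are free on the $R_2$-part. Each $R_i$ is a direct factor of the coherent ring $R$ and therefore coherent, so $R \bowtie I$ is coherent as a finite product of coherent rings.

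Next I would check that each $R_i$ is itself an $(1,d)$-ring. The maximal ideals of $R_i$ correspond bijectively to maximal ideals $m$ of $R$ containing the complementary idempotent, and in either case the localization of $R_i$ at the corresponding maximal ideal is simply $R_m$, which is an $(1,d)$-ring by hypothesis. Since $R_i$ is coherent and locally $(1,d)$ at every maximal ideal, the standard local-global principle for projective dimension of finitely presented modules over coherent rings (the $d$-th syzygy of a finitely presented module is finitely presented, and projectivity of a finitely presented module is a local property) yields that each $R_i$ is an $(1,d)$-ring. Finally, a finitely presented module over the product $R_1 \times R_2 \times R_2$ splits as a triple of finitely presented modules over the factors, so the bound on projective dimension transfers to the product, showing that $R \bowtie I$ is an $(1,d)$-ring.

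The main obstacle is really the local-global step, since one needs the $d$-th syzygy of a finitely presented module to remain finitely presented in order to test projectivity locally; coherence of $R \bowtie I$, secured via the idempotent decomposition, is precisely what makes this ingredient available.
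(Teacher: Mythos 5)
Your proposal is correct, but it follows a genuinely different route from the paper. The paper works locally from the start: it invokes the coherence of $R\bowtie I$ (from \cite[Theorem 3.1]{CM}) together with Costa's local--global criterion \cite[Theorem 3.2]{C} to reduce to showing that $(R\bowtie I)_M$ is an $(1,d)$-ring for every maximal ideal $M$, and then reads off these localizations (as $R_m$ or $R_m\bowtie I_m$) from D'Anna's localization lemma and the dichotomy $I_m\in\{0,R_m\}$ for a pure ideal. You instead exploit the hypothesis that $I$ is finitely generated \emph{globally}: purity plus finite generation gives $I=eR$ with $e=e^2$, whence the explicit decomposition $R\bowtie I\cong R_1\times R_2\times R_2$ with $R_1=(1-e)R$, $R_2=eR$ (your coordinate computation checks out), and you then apply the local--global principle to the factors $R_i$ rather than to $R\bowtie I$ itself. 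Your version buys transparency: coherence of $R\bowtie I$ becomes obvious (no appeal to \cite{CM} needed), the behaviour under finite products of the $(1,d)$-property is elementary, and you sidestep the paper's somewhat awkward case ``$(R\bowtie I)_M\cong R_m\times R_m$'' (which cannot literally be a localization at a maximal ideal). The paper's version buys uniformity with its Theorem \ref{1}, whose localization machinery applies to pure ideals that are not finitely generated, where no idempotent generator exists; your argument is intrinsically tied to the finitely generated case, which is however exactly the hypothesis of this theorem. Note that both arguments ultimately rest on the same nontrivial ingredient, the coherent local--global principle for projective dimension of finitely presented modules, which you correctly identify and sketch (finitely presented syzygies via coherence, plus ``finitely presented and locally projective implies flat implies projective'').
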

\begin{proof}
Using \cite[Theorem 3.2]{C} and \cite[Theorem 3.1]{CM}, we have to
prove that for any maximal ideal $M$ of $R\bowtie I$, the ring
$(R\bowtie I)_M$ is an $(1,d)$-ring. So, let $M$ be such ideal and
set $m:=M\cap R$. From the proof of Theorem \ref{1}, we have two
possible cases:

\begin{enumerate}
    \item $(R\bowtie I)_{M}\cong R_{m}$ if $I_m=0$ or $I\nsubseteq
    m$.
    \item $(R\bowtie I)_{M} \cong R_{m}\times R_m$ if $I_m=R_m$ or
    $I\subseteq m$.
\end{enumerate}
So, by the hypothesis conditions, $(R\bowtie I)_M$ is an
$(1,d)$-ring since $R_m$ is it, as desired.
\end{proof}
By the fact that every ideal over a semisimple ring is pure we
conclude from Theorem \ref{20} the following Corollary.
\begin{corollary}Let $R$ be a ring  and let  $I$ be an
ideal of $R$. If $R$ is a semisimple ring, then so is $R\bowtie
I$.
\end{corollary}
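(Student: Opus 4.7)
The plan is to invoke Theorem \ref{20} with $d=0$ and then upgrade the resulting $(1,0)$-conclusion to semisimplicity via an auxiliary Noetherian argument. To verify the hypotheses of Theorem \ref{20}: a commutative semisimple ring $R$ decomposes as a finite product of fields, so $R$ is Noetherian (hence coherent) and every ideal of $R$ is a direct summand. In particular, the given ideal $I$ is pure and finitely generated. For every maximal ideal $m$ of $R$, the localization $R_m$ is a field, and fields are trivially $(1,0)$-rings. These observations together verify all hypotheses of Theorem \ref{20}, yielding that $R\bowtie I$ is an $(1,0)$-ring.

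To promote this conclusion to full semisimplicity, I would combine two facts. First, Theorem \ref{1} gives $\wdim(R\bowtie I)=\wdim(R)=0$, so $R\bowtie I$ is von Neumann regular. Second, there is an $R$-module isomorphism $R\bowtie I \cong R\oplus I$ via $(r,r+i)\mapsto (r,i)$, so $R\bowtie I$ is a finitely generated module over the Noetherian ring $R$; in particular, $R\bowtie I$ is a Noetherian ring. A commutative Noetherian von Neumann regular ring is semisimple, finishing the argument.

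The main (minor) obstacle lies in this last upgrade step. Theorem \ref{20} only guarantees that finitely presented $(R\bowtie I)$-modules are projective, and the weak-dimension bound from Theorem \ref{1} only gives flatness of all modules. Neither by itself is sufficient for semisimplicity, which requires every module to be projective. Establishing Noetherianness of $R\bowtie I$ via the $R$-module decomposition $R\bowtie I\cong R\oplus I$ is what bridges the gap, since for commutative rings ``Noetherian plus von Neumann regular'' collapses to a finite product of fields, i.e., semisimple.
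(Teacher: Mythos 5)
Your proof is correct and follows essentially the same route as the paper, which likewise deduces the corollary from Theorem \ref{20} by observing that every ideal of a commutative semisimple ring (a finite product of fields) is pure and finitely generated, and that each localization $R_m$ is a field. Where you go beyond the paper is the final upgrade: Theorem \ref{20} with $d=0$ only yields that $R\bowtie I$ is a $(1,0)$-ring, i.e., von Neumann regular, which is strictly weaker than semisimple in general, and the paper silently elides this point. Your bridge --- that $R\bowtie I\cong R\oplus I$ is a finitely generated module over the Noetherian ring $R$, hence a Noetherian ring, and that a commutative Noetherian von Neumann regular ring is a finite product of fields --- is exactly what is needed, so your write-up is in fact more complete than the paper's. (An even more elementary alternative, which neither you nor the paper uses, is to note that $I$ is generated by an idempotent, so $R\bowtie I$ decomposes directly as a finite product of fields.)
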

Now, we give a wide class of rings which are not weak
$(n,d)$-rings (and so not $(n,d)$-rings) for each positive
integers $n$ and $d$.
\begin{theorem}\label{17}Let $R$ be a  ring and let $I$ be a proper ideal of $R$ which satisfies the following condition:

\begin{enumerate}
    \item $R_{m}$ is a domain for
  every maximal ideal $m$ of $R$.
    \item $I_{m}$ is a principal proper ideal of
$R_{m}$ for every maximal ideal $m$ of $R$.
\end{enumerate}
Then, $wdim(R\bowtie I) (=gldim(R\bowtie I))=\infty$.
\end{theorem}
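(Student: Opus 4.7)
The plan is to apply the local--global formula $(\intercal)$ from the proof of Theorem \ref{1}, thereby reducing the task to a single localization, and then to write down by hand an explicit $(R\bowtie I)$-module of infinite flat dimension.

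First I would note that condition (2) forces $I\subseteq m$ for every maximal ideal $m$ of $R$: otherwise $I_m$ would contain a unit and hence equal $R_m$, contradicting properness. Tacitly assuming $I\neq 0$, I pick $0\neq a\in I$ and choose a maximal ideal $m$ of $R$ containing the proper ideal $\{r\in R\mid ra=0\}$; then $a$ remains nonzero in $R_m$, so $I_m$ is a nonzero principal proper ideal, say $I_m=\alpha R_m$ with $\alpha$ a nonzero non-unit in the local domain $R_m$. Lemma \ref{2}(1) applies, so the maximal ideal $M=\{(p,p+i)\mid p\in m,\,i\in I\}$ of $R\bowtie I$ satisfies $(R\bowtie I)_M\cong R_m\bowtie I_m$. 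By $(\intercal)$ it is then enough to show that the local ring $S:=R'\bowtie\alpha R'$, where $R':=R_m$, has infinite weak dimension.

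For this I would study the $S$-module $R'$, identified with $S/O_1$ through the first projection. Writing $O_1=(0,\alpha)S$ and $O_2=(\alpha,0)S$, a direct computation that hinges on $R'$ being a domain and $\alpha\neq 0$ yields $\mathrm{Ann}_S(0,\alpha)=O_2$ and $\mathrm{Ann}_S(\alpha,0)=O_1$. Splicing the short exact sequences
\[0\to O_2\to S\xrightarrow{(0,\alpha)}O_1\to 0,\qquad 0\to O_1\to S\xrightarrow{(\alpha,0)}O_2\to 0\]
furnishes the $2$-periodic free resolution
\[\cdots\xrightarrow{(\alpha,0)}S\xrightarrow{(0,\alpha)}S\xrightarrow{(\alpha,0)}S\xrightarrow{(0,\alpha)}S\longrightarrow R'\to 0.\]

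Finally, applying $-\otimes_S R'$ (with $R'=S/O_1$) collapses $(0,\alpha)\in O_1$ to zero and sends $(\alpha,0)$ to multiplication by $\alpha$, so the tensored complex becomes
\[\cdots\xrightarrow{\alpha}R'\xrightarrow{0}R'\xrightarrow{\alpha}R'\xrightarrow{0}R'\to 0.\]
Since $R'$ is a domain and $\alpha$ is a non-unit, $Tor^{2k+1}_S(R',R')\cong R'/\alpha R'\neq 0$ for every $k\geq 0$, so $fd_S(R')=\infty$, whence $wdim(S)=\infty$ and $wdim(R\bowtie I)=\infty=gldim(R\bowtie I)$. The only delicate step is locating a maximal ideal $m$ at which $I_m$ is genuinely nonzero; everything after that is the standard trick of converting the annihilator identities together with $O_1O_2=0$ into an infinite periodic resolution.
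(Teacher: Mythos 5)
Your argument is correct, and it reduces to the local case exactly as the paper does: both proofs pass to a maximal ideal $M$ of $R\bowtie I$ lying over a maximal ideal $m$ of $R$ containing $I$, identify $(R\bowtie I)_M$ with $R_m\bowtie I_m$ via D'Anna's localization lemma, and conclude with the local--global formula $(\intercal)$ from the proof of Theorem \ref{1}. The difference lies in how the key local fact is established. The paper simply cites \cite[Theorem 2.13]{CM} for the statement that $R_m\bowtie I_m$ has infinite weak dimension when $R_m$ is a local domain and $I_m$ is a nonzero principal proper ideal, whereas you prove it from scratch: your identities $\Ann_S(0,\alpha)=O_2$ and $\Ann_S(\alpha,0)=O_1$ are correct (both hinge on $R_m$ being a domain and $\alpha\neq 0$), the spliced $2$-periodic free resolution of $R_m\cong S/O_1$ is exact, and tensoring indeed gives $\Tor_{2k+1}^{S}(R_m,R_m)\cong R_m/\alpha R_m\neq 0$ for all $k$, since $\alpha$ is a nonunit of the local ring $R_m$; hence $\fd_S(R_m)=\infty$. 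So your proof is self-contained where the paper outsources the heart of the matter. You are also more careful than the paper on one point: the paper picks an \emph{arbitrary} maximal ideal $m\supseteq I$, which does not by itself guarantee $I_m\neq 0$ (and if $I_m=0$ the localized ring is just $R_m$, where nothing forces infinite weak dimension); your choice of $m$ containing $\Ann_R(a)$ for some $0\neq a\in I$ repairs this, at the cost of the standing assumption $I\neq 0$, which you correctly flag and which is in any case necessary for the theorem to be true as stated.
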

\begin{proof}
Let $m$ be a maximal ideal of $R$ such that $I\subseteq m
\varsubsetneq R$. By Lemma \ref{2}, $R_m\bowtie I_m= (R\bowtie
I)_M$ where $M=\{(p,p+i)|p\in m,i\in I\}$. From  \cite[Theorem
2.13]{CM} and by the hypothesis conditions, we have $wdim
(R\bowtie I)_{M}= wdim (R_{m}\bowtie I_{m})=\infty$. Then, the
desired result follows from \cite[Theorem 1.3.14]{G}.
\end{proof}
The following example shows that the condition "$I_{m}$ is a
principal proper ideal of $R_{m}$ for every maximal ideal $m$ in
$R$" is necessary in Theorem \ref{17}.

\begin{example}Let $R$ be a Von Neumann regular ring and let $I$ be a proper
ideal of $R$. Then $wdim(R\bowtie I)=0$ since $(R\bowtie I)$ is a
Von Neumann regular ring, and $I_{m}$ is not a proper ideal  of
$R_{m}$ since $R_{m}$ is a field.
\end{example}


\bibliographystyle{amsplain}

\begin{thebibliography}{10}


\bibitem{C} D. L. Costa; \textit{Parameterising families of
      non-Noetherian rings}, Comm. Algebra, 22(1994), 3997-4011

      \bibitem{CE} H. Cartan and S. Eilenberg; \textit{Homological Algebra},
      Princeton Univ. Press. Princeton (1956).


      \bibitem{CM} M. Chhiti and N. Mahdou; \textit{Some homological
      properties of an amalgamated duplication of a ring along an
      ideal}, Submitted for publication. Available from math
      .AC/0903.2240 V1 12 mar 2009

       \bibitem{A} M. D'Anna; \textit{A construction of Gorenstein rings}, J. Algebra {\bf 306} (2006), no. 2, 507-519.

       \bibitem{AF1} M. D'Anna and M. Fontana; \textit{The amalgamated duplication
       of a ring along a multiplicative-canonical ideal}, Ark. Mat. {\bf 45} (2007), no. 2, 241-252.

       \bibitem{AF2} M. D'Anna and M. Fontana; \textit{An amalgamated duplication
      of a ring along an ideal: the basic properties}. J. Algebra Appl. {\bf 6} (2007), no. 3, 443-459.


      \bibitem{G} S. Glaz; \textit{Commutative Coherent Rings},
     Springer-Verlag, Lecture Notes in Mathematics, 1371 (1989).

     \bibitem{G_{1}} S. Glaz; \textit{The weak Dimenssions of Gaussian
     rings}, Proc.Amer. Maths. Soc. 133 (2005),2507-2513.

      \bibitem{H} J. A. Huckaba; \textit{Commutative Coherent Rings with Zero
     Divizors}. Marcel Dekker, New York Basel, (1988).

      \bibitem{HHP}W. Heinzer, J. Huckaba and I. Papick; \textit{m-canonical
      ideals in integral domains}, Comm.Algebra 26(1998), 3021-3043.

      \bibitem{KM1} S. Kabbaj and N. Mahdou; \textit{Trivial Extensions Defined by coherent-like condition},
      Comm.Algebra 32 (10) (2004), 3937-3953

      \bibitem{KM2} S. Kabbaj and N. Mahdou; \textit{Trivial extensions of local rings and a conjecture of Costa},
         Lecture  Notes in Pure and Appl. Math., Vol.231, Marcel Dekker, New York, (2003), 301-312.

     \bibitem{M1} N. Mahdou; \textit{On Costa's conjecture}, Comm.
     Algebra, 29 (7) (2001), 2775-2785.

     \bibitem{M2} N. Mahdou; \textit{On 2-Von Neumann regular rings}, Comm. Algebra 33 (10) (2005), 3489-3496.

     \bibitem{MPR} WM. McGovern, G. Puninski, P.
     Rothmaler; \textit{When every projective module is a direct sum of
     finitely generated modules}. J. Algebra 315 (2007). 454-481.

     \bibitem{MY} H. R. Maimani and S. Yassemi; \textit{Zero-divisor graphs of amalgamated duplication
      of a ring along an ideal}. J. Pure Appl. Algebra {\bf 212} (1) (2008), 168-174.

      \bibitem{N} M. Nagata; \textit{Local Rings}. Interscience, New york, (1962).

     \bibitem{PR} G. Puninski and P. Rothmaler, \textit{When every
     finitely generated flat module is projective}. J. Algebra 277 (2004), 542-558.


     \bibitem{V}W. V. Vasconcelas; \textit{Finiteness in projective ideals}.
     J. Algebra 25(1973). 269-278.
\end{thebibliography}

\end{document}